\newtheorem{theorem}{Theorem}[section]
\newtheorem{conjecture}[theorem]{Conjecture}
\newtheorem{lemma}[theorem]{Lemma}
\newtheorem{proposition}[theorem]{Proposition}
\newtheorem*{CDT}{Cauchy-Davenport Theorem}
\newcommand{\N}{\mathbb N}
\newcommand{\Z}{\mathbb Z}
\newcommand{\HH}{\mathsf H}
\newcommand{\Fc}{\mathcal F}
\newcommand{\vp}{\mathsf v}
\newcommand{\D}{\mathsf D}
\newcommand{\h}{\mathsf h}
\newcommand{\ord}{\text{\rm ord}}
\newcommand{\supp}{\text{\rm supp}}
\newcommand{\be}{\begin{equation}}
\newcommand{\ee}{\end{equation}}
\newcommand{\bnml}{\begin{multline}}
\newcommand{\enml}{\end{multline}}
\newcommand{\ber}{\begin{eqnarray}}
\newcommand{\eer}{\end{eqnarray}}
\newcommand{\nn}{\nonumber}
\newcommand{\Sum}[2]{\underset{#1}{\overset{#2}{\sum}}}
\newcommand{\Summ}[1]{\underset{#1}{\sum}}
\newcommand{\und}{\;\;\;\mbox{ and }\;\;\;}
\author{David J. Grynkiewicz}
\address{Institut f\"ur Mathematik und Wissenschaftliches Rechnen \\
Karl-Franzens-Universit\"at Graz \\
Heinrichstra\ss e 36\\
8010 Graz, Austria} \email{diambri@hotmail.com}
\subjclass[2000]{11B50, 11B75}
\begin{document}
\title{Note on a Conjecture of Graham}
\thanks{Supported by FWF project number M1014-N13}

\begin{abstract}
An old conjecture of Graham stated that if $n$ is a prime and $S$ is a sequence of $n$ terms from the cyclic group $C_n$ such that all (nontrivial) zero-sum subsequences have the same length, then $S$ must contain at most two distinct terms. In 1976, Erd\H{o}s and Szemeredi gave a proof of the conjecture for sufficiently large primes $n$. However, the proof was complicated enough that the details for small primes were never worked out. Both in the paper of Erd\H{o}s and Szemeredi and in a later survey by Erd\H{o}s and Graham, the complexity of the proof was lamented. Recently, a new proof, valid even for non-primes $n$, was given by Gao, Hamidoune and Wang, using Savchev and Chen's recently proved structure theorem for zero-sum free sequences of long length in $C_n$. However, as this is a fairly involved result, they did not believe it to be the simple proof sought by Erd\H{o}s, Graham and Szemeredi. In this paper, we give a short proof of the original conjecture that uses only the Cauchy-Davenport Theorem and pigeonhole principle, thus perhaps qualifying as a simple proof. Replacing the use of the Cauchy-Davenport Theorem with the Devos-Goddyn-Mohar Theorem, we obtain an alternate proof, albeit not as simple, of the non-prime case. Additionally, our method yields an exhaustive list detailing the precise structure of $S$ and works for an arbitrary finite abelian group, though the only non-cyclic group for which this is nontrivial is $C_2\oplus C_{2m}$.
\end{abstract}

\maketitle

\section{Introduction}

The following was an old conjecture of Graham \cite{Erdos-Szermeredi-Graham-conj}.

\bigskip

\begin{conjecture} Let $C_p$ be the cyclic group of order $p$ prime and let $S$ a sequence over $C_p$ of length $p$. If all (nontrivial) zero-sum subsequences of $S$ are of the same length, then the number of distinct terms in $S$ is at most $2$.
\end{conjecture}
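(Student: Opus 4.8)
The plan is to isolate two degenerate cases that succumb to a pigeonhole/partial-sum argument, and then to recast the remaining case additively so that the Cauchy--Davenport Theorem applies.

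First I would note that $\ell$ is well defined: ordering $S$ as $a_1,\dots,a_p$ and considering the $p+1$ partial sums $P_0=0,P_1,\dots,P_p$ in $\Z/p\Z$, two coincide by pigeonhole and the intervening block is a nonempty zero-sum. Two situations are then easy. If $0$ occurs in $S$, then $(0)$ forces $\ell=1$, a repeated $0$ would give the length-$2$ zero-sum $(0,0)$, so $0$ occurs once and $S=(0)\cdot T$ with $T$ zero-sum free of length $p-1$. If instead $0\notin S$ but $\sigma(S)=0$, then $S$ is itself a zero-sum, $\ell=p$, and $S$ has no proper nonempty zero-sum. In both situations, in \emph{every} ordering of the relevant sequence the pertinent partial sums are pairwise distinct --- a repetition would create a zero-sum of a forbidden length --- and so form a full system of residues modulo $p$; an adjacent transposition leaves the ordering admissible, which forces each pair of consecutive terms to be equal. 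Hence $T=h^{\,p-1}$ (so $S$ has exactly the two distinct terms $0$ and $h$) in the first situation and $S=h^{\,p}$ (one distinct term) in the second.

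It remains to treat $0\notin S$ with $\sigma(S)\neq0$, whence $2\le\ell\le p-1$. Since the $p$ terms lie among the $p-1$ nonzero residues, some value $g$ has multiplicity $m=\vp_g(S)\ge2$, and after scaling by $g^{-1}$ I may assume $g=1$. Writing $S=1^{\,m}\cdot U$ with $U$ of length $p-m$ avoiding $0$ and $1$, the role of the $m$ copies of $1$ is that any subsequence $V$ of $U$ with $\sigma(V)\equiv-j\pmod p$, $0\le j\le m$, completes to the zero-sum $1^{\,j}\cdot V$; calling such a $V$ \emph{dangerous}, the hypothesis forces $|V|+j=\ell$. This yields an exclusion principle: if $V$ is a nonempty dangerous subsequence and $v\in\supp(U)$ still has a spare copy, then $V\cdot v$ is not dangerous, for otherwise its completion has length $\ell$ as well, giving $j'=j-1$, while $\sigma(V\cdot v)=\sigma(V)+v$ gives $j'\equiv j-v$, whence $v\equiv1$, contrary to $v\neq1$.

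To conclude I would suppose $S$ has at least three distinct terms, i.e. $\supp(U)$ has at least two values, and aim to exhibit two dangerous subsequences differing by a single term, contradicting the exclusion principle and so producing a zero-sum of length $\ell\pm1$. Here Cauchy--Davenport enters: writing the set of all subset sums of $U$ as the iterated sumset $\sum_{v}\{0,v,2v,\dots,\vp_v(U)\,v\}$ and applying the Cauchy--Davenport Theorem gives at least $\min(p,\,|U|+1)=p-m+1$ distinct values, so at least two lie in the size-$(m+1)$ window $\{0,-1,\dots,-m\}$; length-graded refinements, bounding the number of terms used, are meant to locate two such sums sitting on subsequences that differ by one element. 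The main obstacle is precisely this last step: turning a cardinality bound for subsequence sums into an honestly \emph{adjacent} pair of dangerous subsequences while respecting the multiplicity (box) constraints on how many copies of each value are available. The difficulty is sharpest when the maximal multiplicity $m$ is small, so that the window is narrow and a single counting estimate is too weak; that regime, where $S$ instead has many distinct values, seems to require a complementary argument exploiting that fixed-length subsequence sums of a support-rich sequence spread out under Cauchy--Davenport.
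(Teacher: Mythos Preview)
Your proposal is honest about its own gap, and that gap is genuine. The pigeonhole bound $m\ge 2$ is far too weak: with a window of size $m+1$ and $p-m+1$ subset sums guaranteed by Cauchy--Davenport, you get only two sums in the window, and one of them is the trivial sum $0$ from the empty subsequence. So you obtain a single nontrivial dangerous $V$, and there is no mechanism to produce a second one adjacent to it. Your exclusion principle is correct but idle without such a pair.

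The missing idea is exactly what you gesture at in your last sentence, and it is the crux of the paper's argument. One must first prove that the maximum multiplicity satisfies $m\ge\max\{r,\,p-r+1\}>p/2$. The paper does this (its Step~1) by applying Cauchy--Davenport not to subset sums of $U$ but to the fixed-length sums $\Sigma_{r-1}(0^{r-2}S)$ and $\Sigma_{p-r-1}(0^{p-r-1}S)$: if $\h(S)\le r-1$ (respectively $\le p-r$), one can partition $0^{r-2}S$ (respectively $0^{p-r-1}Sx^{-1}$) into $r-1$ (respectively $p-r-1$) nonempty sets whose sumset, by Cauchy--Davenport, is all of $C_p$, contradicting $0\notin\Sigma_{r-1}(0^{r-2}S)$ (respectively $\sigma(S)\notin\Sigma_{p-r-1}(0^{p-r-1}S)$). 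This is the ``complementary argument'' you allude to, and it is not optional---it is what makes the rest possible.

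Even granting $m>p/2$, your endgame remains vague. The paper does not search for two adjacent dangerous subsequences directly; instead it takes a maximal $R\mid T$ with $\sigma(R)=|R|g$, uses a short lemma (if $\Sigma(R)\subseteq\{g,\dots,|R|g\}$ and $\sigma(R)=|R|g$ then $R=g^{|R|}$) to locate an extra element $h$ outside the obvious progression, and then runs a partial-sum argument on $TR^{-1}$ together with the pigeonhole inequality $|A|+|B|\ge p+1$ to force either a contradiction to the maximality of $R$ or $|\supp(S)|\le 2$. Your exclusion principle is morally the ``maximality of $R$'' device, but the structure around it---the auxiliary lemma and the specific choice of $A$ and $B$---is what converts the large-multiplicity information into a proof.
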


\bigskip

In 1976, Erd\H{o}s and Szemeredi gave a proof of the conjecture for sufficiently large primes $p$ \cite{Erdos-Szermeredi-Graham-conj}. However, the proof was complicated enough that the details for small primes were never worked out. Both in the paper of Erd\H{o}s and Szemeredi and in a later survey by Erd\H{o}s and Graham \cite{Erdos-Graham-survey}, the complexity of the proof was lamented. Recently, a new proof, valid even for non-primes, was given by Gao, Hamidoune and Wang \cite{Gao-Ham-Wang-graham-conj}, using Savchev and Chen's recently proved structure theorem for zero-sum free sequences of long length in the cyclic group $C_n$ \cite{sav-chen}. However, as Savchev and Chen's result is fairly involved, they did not believe it to be the simple proof sought by Erd\H{o}s, Graham and Szemeredi.

In this paper, we give a short proof to the original conjecture of Graham that uses only  the Cauchy-Davenport Theorem and pigeonhole principle \cite{natbook} \cite{taobook}. Since the proof of the Cauchy-Davenport Theorem (known since 1813 \cite{cauchy}) is elementary and requires only a paragraph, our proof may perhaps qualify as simple. Replacing the use of the Cauchy-Davenport Theorem with the Devos-Goddyn-Mohar Theorem \cite{Devos-Goddyn-Mohar-thm} (alternatively, the partition theorem from \cite{ccd} \cite{hamconj-paper} could be used instead of Devos-Goddyn-Mohar), we obtain an alternate proof, albeit not as simple, of the non-prime case. With only a little added effort, our method naturally yields an exhaustive list detailing the precise structure of $S$ and shows that the result holds in an arbitrary finite abelian group, though the only additional group for which this is nontrivial (in view of basic bounds on the Davenport constant) is $C_2\oplus C_{2m}$. We state the main theorem in Section 3, after introducing modern notation for sumsets, sequences and subsequence sums.

\bigskip
\section{Notation and Preliminaries} \label{1}
\bigskip

We follow the notation of \cite{GaoGer-survey}, \cite{Alfred-book}, \cite{Alfred-BCN-notes} and \cite{oscar-weighted-projectI} concerning sumsets, sequences and subsequence sums. For the convenience of the reader less familiar with this notation, we give self-contained definitions for all relevant concepts in this section.

\subsection{Sumsets}
Let $G$  be an abelian group, and let  $A,\, B \subseteq G$ be nonempty subsets. Then  $$A+B = \{a+b
\mid a \in A,\, b \in B \}$$  denotes their  \emph{sumset}. For $g\in G$, we let $g+A=\{g+a\mid a\in A\}$ and let $r_{A,B}(g)$ denote the number of representations of $g=a+b$ as a sum with $a\in A$ and $b\in B$. The  \emph{stabilizer}  of $A$ is  $$\mathsf{H}(A): = \{ g \in G \mid g +A = A\}.$$ The order of an element $g\in G$ is denoted $\ord(g)$, and we use $\phi_H:G\rightarrow G/H$ to denote the natural homomorphism modulo $H$. We use $C_n$ to denote the cyclic group of order $n$.

\subsection{Sequences}
We let $\Fc(G)$ denote the free abelian monoid with basis $G$ written multiplicatively. The elements of $\Fc(G)$ are then just multi-sets over $G$, but following long standing tradition, we refer to the $S\in \Fc(G)$ as \emph{sequences}. We write
sequences $S \in \mathcal F (G)$ in the form
$$
S =  s_1\cdots s_r=\prod_{g \in G} g^{\vp_g (S)}\,, \quad \text{where} \quad
\vp_g (S)\geq 0\mbox{ and } s_i\in G.
$$
We call $|S|:=r=\Summ{g\in G}\vp_g(S)$ the \emph{length} of $S$, and  $\mathsf v_g (S)\in \N_0$  the \ {\it multiplicity} \ of $g$ in
$S$. The \emph{support} of $S$ is $$\supp(S):=\{g\in G\mid \vp_g(S)>0\}.$$  A sequence $S_1 $ is called a \emph{subsequence}  of
$S$ if $S_1 | S$  in $\mathcal F (G)$  (equivalently,
$\vp_g (S_1) \leq\vp_g (S)$  for all $g \in G$), and in such case, $S{S_1}^{-1}$ or ${S_1}^{-1}S$ denotes the subsequence of $S$ obtained by removing all terms from $S_1$.
We let $$\h(S):=\max \{\vp_g (S) \mid g \in G \}$$ denote the maximum multiplicity of a term of $S$.
Given any map $\varphi: G\rightarrow G'$, we extend $\varphi$ to a map of sequences,
$\varphi: \Fc(G)\rightarrow \Fc(G')$,  by letting $\varphi(S):=\varphi(s_1)\cdots \varphi(s_r)$.

\subsection{Subsequence Sums} If  $S=s_1\cdots s_r\in \Fc(G)$, with $s_i\in G$, then the \emph{sum} of $S$ is $$\sigma(S):=\Sum{i=1}{r}s_i=\Summ{g\in G}\vp_g(S)g.$$ We say $S$ is \emph{zero-sum} if $\sigma(S)=0$. We adapt the convention that the sum of the trivial/empty sequence is zero.
We follow the usual notation for the set of subsequence sums:
\ber\Sigma_n(S) &=& \left\{\sigma(T)\mid\;\; T|S \mbox{ and } |T|=n\right\}\nn\\\nn
\Sigma_{\leq
n}(S)=\bigcup_{i=1}^{n}\Sigma_i(S)&\und&\Sigma_{\geq n}(S)=\bigcup_{i=n}^{|S|}\Sigma_i(S)\und \Sigma(S)=\Sigma_{\leq |S|}(S).\eer

\subsection{Preliminary Results}
For a finite abelian group $G$, we define the Davenport constant $\D(G)$ to be the minimal integer such that any $S\in \Fc(G)$ with $|S|\geq \D(G)$ has $0\in \Sigma(S)$. A basic argument shows $\D(G)\leq |G|$ (see \cite[Propositions 5.1.4]{Alfred-book}).

We need the following result (see  \cite[Theorem 5.2.10; Lemma 5.2.9]{Alfred-book} and also \cite[Lemma 2.1]{natbook}). Proposition \ref{mult_result}(ii) is a simple consequence of the pigeonhole principle, and we will only use Proposition \ref{mult_result}(i) in the trivial case $|B|=k=2$.

\begin{proposition}\label{mult_result} Let $G$ be an abelian group with $A,\,B\subseteq G$
 finite and nonempty:

(i) if $|A+B|\leq |A|+|B|-k$, then $r_{A,B}(x)\geq k$ for all $x\in A+B$;

(ii) if $G$ is finite and $|A|+|B|\geq |G|+k$, then $r_{A,B}(x)\geq k$ for all $x\in G$.
\end{proposition}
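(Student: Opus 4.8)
The plan is to reduce both parts to the elementary identity $r_{A,B}(x)=|A\cap(x-B)|$, valid for every $x\in G$: a representation $x=a+b$ with $a\in A$, $b\in B$ is exactly the same datum as an element $a\in A$ with $x-a\in B$, i.e.\ $a\in A\cap(x-B)$. Since translation and negation are bijections, $|x-B|=|B|$, so inclusion--exclusion gives the master equation
\[
r_{A,B}(x)=|A\cap(x-B)|=|A|+|B|-|A\cup(x-B)|.
\]
Both parts now follow from an upper bound on $|A\cup(x-B)|$, the two differing only in the ambient set used to bound this union.

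For part (ii), which is the pure pigeonhole statement, I would simply observe that $A\cup(x-B)\subseteq G$, so $|A\cup(x-B)|\le|G|$. Substituting into the master equation yields $r_{A,B}(x)\ge|A|+|B|-|G|\ge k$ for every $x\in G$, using the hypothesis $|A|+|B|\ge|G|+k$. This needs nothing beyond counting, and it is the instance of Proposition~\ref{mult_result} actually invoked elsewhere in the paper.

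For part (i) the master equation shows the claim is equivalent to the representation lower bound $r_{A,B}(x)\ge|A|+|B|-|A+B|$ for all $x\in A+B$, i.e.\ to $|A\cup(x-B)|\le|A+B|$. The main obstacle is that, unlike in (ii), this is \emph{not} witnessed by a set inclusion: normalizing a representation $x=a_0+b_0$ so that $a_0=b_0=0$ (whence $0\in A$, $0\in B$ and $x=0$) turns the claim into $|A\cup(-B)|\le|A+B|$, yet already $A=B=\{0,1\}\subseteq\Z$ shows $-B\not\subseteq A+B$. A genuine argument is therefore required; the clean route is via Kneser's Theorem, passing to $G/\mathsf{H}(A+B)$, where the sumset is aperiodic and the bound follows from the aperiodic case. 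This is precisely why the general inequality is quoted from the standard references rather than reproved here.

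Finally, I would record that the application needs only the trivial instance $|B|=k=2$ of (i), where no machinery is needed. Writing $B=\{b_1,b_2\}$, the hypothesis $|A+B|\le|A|$ together with $A+B\supseteq A+b_1$ and $|A+b_1|=|A|$ forces $A+b_1=A+b_2$, hence $b_1-b_2\in\mathsf{H}(A)$. Then for any $x=a+b_1\in A+B$ the element $a':=a+b_1-b_2$ again lies in $A$ (as $A+(b_1-b_2)=A$) and satisfies $a'+b_2=x$ with $a'\ne a$, exhibiting the two distinct representations $x=a+b_1=a'+b_2$ and giving $r_{A,B}(x)\ge2$.
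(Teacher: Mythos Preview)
Your treatment is correct and matches the paper's own handling of this proposition. The paper does not give a self-contained proof either: it cites Theorem~5.2.10 and Lemma~5.2.9 of Geroldinger--Halter-Koch and Lemma~2.1 of Nathanson, remarks that (ii) ``is a simple consequence of the pigeonhole principle,'' and stresses that (i) is only invoked in the trivial case $|B|=k=2$. Your pigeonhole argument for (ii) via the identity $r_{A,B}(x)=|A|+|B|-|A\cup(x-B)|$ together with $|A\cup(x-B)|\le|G|$ is exactly the intended one-line proof; your observation that (i) does \emph{not} reduce to a set inclusion (your example $A=B=\{0,1\}\subseteq\Z$ is apt) and that the general case is handled in the literature via Kneser's theorem is accurate and is precisely why the paper outsources it to the references; and your direct verification of the $|B|=k=2$ instance---forcing $A+b_1=A+b_2$ from $|A+B|\le|A|$ and then exhibiting the second representation---is clean and is all that the later arguments require.
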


Next, we state a special case of the Devos-Goddyn-Mohar Theorem \cite{Devos-Goddyn-Mohar-thm}.

\begin{theorem}\label{DGM-devos-etal-thm} Let $G$ be an abelian group, let $S\in \Fc(G)$ be a sequence, and let $n\in \Z^+$ with $n\leq |S|$. If $H=\HH(\Sigma_n(S))$, then \be\label{DGM-bound}|\Sigma_n(S)|\geq (\Summ{g\in G/H}\min\{n,\,\vp_g(\phi_H(S))\}-n+1)|H|.\ee
\end{theorem}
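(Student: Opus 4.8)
The plan is to first normalize by the stabilizer and then reduce the whole statement to a single application of Kneser's Theorem to a carefully chosen sumset sitting inside $\Sigma_n(S)$.

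First I would pass to the quotient $G/H$. Writing $T=\phi_H(S)\in\Fc(G/H)$, one checks that $\phi_H(\Sigma_n(S))=\Sigma_n(T)$, since $\phi_H$ is a homomorphism and $n$-term subsequences of $S$ and of $T$ correspond. Because $\Sigma_n(S)$ is a union of $H$-cosets (which is precisely the meaning of $H=\HH(\Sigma_n(S))$), we get $|\Sigma_n(S)|=|H|\cdot|\Sigma_n(T)|$, and moreover the image $\Sigma_n(T)$ has trivial stabilizer in $G/H$. Thus, after dividing through by $|H|$, the theorem is equivalent to the trivial-stabilizer estimate
\[
|\Sigma_n(T)|\;\geq\;\sum_{g\in G/H}\min\{n,\vp_g(T)\}-n+1,\qquad \HH(\Sigma_n(T))=\{0\}.
\]

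To prove this I would exhibit an explicit sumset inside $\Sigma_n(T)$. Set $c_g:=\min\{n,\vp_g(T)\}$ and let $T_0:=\prod_{g}g^{c_g}$, which divides $T$; note $|T_0|=\sum_g c_g$ with $\vp_g(T_0)\leq n$ for every $g$, while $|T_0|\geq n$ since $|T|\geq n$. A standard greedy (round-robin) distribution then partitions $T_0$ into exactly $n$ nonempty sets $A_1,\dots,A_n\subseteq G/H$, placing the $c_g\leq n$ copies of each $g$ into $c_g$ distinct parts. Any choice of one element from each part yields an $n$-term subsequence of $T$, so
\[
A_1+\cdots+A_n\subseteq\Sigma_n(T),\qquad \textstyle\sum_{i=1}^n|A_i|=|T_0|=\sum_g c_g.
\]
Applying the classical Kneser Theorem to the sumset on the left, with $K:=\HH(A_1+\cdots+A_n)$, gives $|A_1+\cdots+A_n|\geq\sum_{i=1}^n|A_i+K|-(n-1)|K|$. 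If $K=\{0\}$ this reads $|\Sigma_n(T)|\geq\sum_g c_g-(n-1)$, which is exactly the required bound, and we are done.

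The hard part is the case $K\neq\{0\}$, where the Kneser correction $-(n-1)|K|$ overshoots and the naive estimate becomes too weak. Here I would argue by induction on $|G/H|$, passing to the further quotient by $K$; the crucial leverage is the hypothesis $\HH(\Sigma_n(T))=\{0\}$, which forces $\Sigma_n(T)$ to be strictly non-$K$-periodic even though the sumset $A_1+\cdots+A_n$ is $K$-periodic, so $\Sigma_n(T)$ must contain elements lying outside that periodic set. The delicate bookkeeping is to show that the terms lost to the factor $|K|$ are recovered by these extra elements together with the inductive estimate applied to $\phi_K(T_0)$ in $(G/H)/K$ — in particular, that the quantities $\min\{n,\vp_g\}$ aggregate correctly under $\phi_K$. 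Verifying that this redistribution reproduces the claimed bound, rather than invoking Kneser itself, is where I expect the real work to lie.
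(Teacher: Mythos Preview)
The paper does not prove this statement at all: Theorem~\ref{DGM-devos-etal-thm} is simply stated as a special case of the Devos--Goddyn--Mohar Theorem and cited from \cite{Devos-Goddyn-Mohar-thm}, so there is no ``paper's own proof'' to compare against. Your proposal is therefore an attempt to supply an argument that the paper deliberately outsources.

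As for the sketch itself, the reduction to the trivial-stabilizer case via the quotient $G/H$ is correct and standard, and the round-robin partition of $T_0$ into $n$ sets with $A_1+\cdots+A_n\subseteq\Sigma_n(T)$ is the natural move; when Kneser gives $K=\{0\}$ you are indeed done. However, the case $K\neq\{0\}$ is not a minor bookkeeping issue but the entire content of the Devos--Goddyn--Mohar result. Your plan to ``induct on $|G/H|$ and recover the lost terms'' is roughly the shape of their argument, but you have not identified the key mechanism: one must carefully choose the partition $A_1,\dots,A_n$ (not just any round-robin distribution works) and track how many $K$-cosets meet $\Sigma_n(T)$ only partially, using the aperiodicity hypothesis to bound that number from below. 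The claim that ``the quantities $\min\{n,\vp_g\}$ aggregate correctly under $\phi_K$'' is false in general without further care --- multiplicities can collapse badly under $\phi_K$ --- and this is precisely where the original proof does its real work. So your outline is pointed in the right direction but stops well short of a proof; what remains is essentially the full Devos--Goddyn--Mohar argument.
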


A particular case of the (general) Devos-Goddyn-Mohar Theorem is the much simpler Cauchy-Davenport Theorem \cite{cauchy} \cite{cdt} \cite{natbook} \cite{taobook}.

\begin{CDT} Let $p$ be prime and let $A_i\subseteq C_p$, for $i=1,\ldots,n$, be nonempty. Then $$|\Sum{i=1}{n}A_i|\geq \min\{\Sum{i=1}{n}|A_i|-n+1,\,p\}.$$
\end{CDT}

\bigskip
\section{When the Length of a Zero-Sum is Unique}
\bigskip

We begin with the following simple lemma.

\begin{lemma}\label{neededlemma} Let $G$ be an abelian group, let $g\in G$, and let $R\in \Fc(G)$ be nontrivial with \be\label{gg1} \Sigma(R)\subseteq \{g,2g,\ldots,|R|g\}.\ee If $|R|\leq \ord(g)-1$ and $\sigma(R)=|R|g$, then $R=g^{|R|}$.
\end{lemma}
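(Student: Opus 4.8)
The plan is to induct on $r:=|R|$ after first reducing to the cyclic subgroup $\langle g\rangle$. Every single term of $R$ is itself a subsequence sum, so \eqref{gg1} forces $\supp(R)\subseteq\{g,2g,\ldots,rg\}\subseteq\langle g\rangle$; hence I may assume $G=\langle g\rangle\cong C_n$ with $n=\ord(g)$ and identify $g$ with $1$. The terms then become residues $c_1,\ldots,c_r\in\{1,\ldots,r\}$ (genuinely distinct residues, since $r\le n-1$ gives $jg\neq j'g$ for $1\le j<j'\le r$), and \eqref{gg1} says that every nonempty subsequence sum lies in $\{1,\ldots,r\}\pmod n$, while $\sigma(R)\equiv r$. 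The base case $r=1$ is immediate: the lone term equals $\sigma(R)=g$.

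For the inductive step ($r\ge 2$) I would first record two structural facts. First, no term equals $rg$: if $s=rg$ were a term, then $Rs^{-1}$ would be a nonempty subsequence with $\sigma(Rs^{-1})=0\notin\{g,\ldots,rg\}$, contradicting \eqref{gg1}; thus every term lies in $\{g,\ldots,(r-1)g\}$. Second, the key ``no repeated partial sum'' observation: fix any ordering $R=s_1\cdots s_r$ and consider the partial sums $\sigma_0=0,\sigma_1,\ldots,\sigma_r=rg$. If $\sigma_i=\sigma_j$ with $i<j$, then $s_{i+1}\cdots s_j$ is a nonempty subsequence of sum $0$, impossible by \eqref{gg1}. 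Hence $\sigma_0,\ldots,\sigma_r$ are $r+1$ distinct elements of the $(r+1)$-element set $\{0,g,\ldots,rg\}$, so in fact $\Sigma(R)=\{g,2g,\ldots,rg\}$; in particular $g\in\Sigma(R)$.

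The engine of the induction is this reduction: if some term of $R$ equals $g$, delete it to form $R'=Rg^{-1}$, of length $r-1$ with $\sigma(R')=(r-1)g$. I claim $R'$ again satisfies \eqref{gg1} with bound $(r-1)g$. Indeed, if a nonempty $T'\mid R'$ had $\sigma(T')=rg$, then $T'g\mid R$ would be nonempty with $\sigma(T'g)=(r+1)g\notin\{g,\ldots,rg\}$ (as $r+1\le n$), contradicting \eqref{gg1}. So $\Sigma(R')\subseteq\{g,\ldots,(r-1)g\}$, and since $r-1\le\ord(g)-1$ the inductive hypothesis yields $R'=g^{r-1}$, whence $R=g^{r}$.

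It remains to produce a term equal to $g$, and this is the step I expect to be the main obstacle. Suppose instead every term is $\ge 2g$, i.e.\ all $c_i\ge 2$. Choose a subsequence $W$ with $\sigma(W)=g$ (which exists by the second paragraph) of minimal length; since each term of $W$ is $\ge 2g$, we have $|W|\ge 2$. Putting any chosen term $t=c_tg$ of $W$ last and writing $\sigma(Wt^{-1})=g-t\equiv 1-c_t\pmod n$, the nonempty complement $Wt^{-1}$ must have its sum in $\{1,\ldots,r\}$, which (as $c_t\ge 2$) forces $n+1-c_t\le r$, i.e.\ $c_t\ge n+1-r$. Thus \emph{every} term of $W$ lies in $\{n+1-r,\ldots,r-1\}$ — an interval that is already empty once $n\ge 2r-1$, giving an immediate contradiction in that range. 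The remaining band $n\le 2r-2$ is where the real care is needed: here the plan is to combine this ``all terms of $W$ are large'' conclusion with the exact identity $\Sigma(R)=\{g,\ldots,rg\}$ and the congruence $\sigma(R)\equiv r$, forcing a short subsequence sum (when $|W|=2$, the forced pair-sum $c_1+c_2=n+1$; when $|W|\ge 3$, a sum of two of these large terms) to land in the forbidden residue set $\{0,\,r+1,\ldots,n-1\}\pmod n$, contradicting \eqref{gg1}. Discharging this low-$n$ band cleanly, rather than the effortless high-$n$ case, is the crux of the argument.
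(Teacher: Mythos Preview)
Your inductive scheme and the reduction to the assertion ``$g\in\supp(R)$'' are correct and pleasant, but the proof is genuinely incomplete at exactly the point you flag: the band $r+1\le n\le 2r-2$. The sketched mechanism there does not work as stated. For $|W|\ge 3$, two terms $c_i,c_j$ of $W$ satisfy $c_i+c_j\ge 2(n+1-r)$, but nothing prevents $c_i+c_j\ge n+2$, in which case $(c_i+c_j)\bmod n\in\{2,\ldots,r\}$ and no contradiction with \eqref{gg1} (or with minimality of $W$) results. For $|W|=2$, you correctly force $c_1+c_2=n+1$, but you do not say how this, together with $\Sigma(R)=\{g,\ldots,rg\}$ and $\sigma(R)\equiv r$, yields a forbidden subsequence sum; a direct contradiction is not apparent, and small examples show the obstruction comes from the \emph{other} $r-2$ terms of $R$ rather than from $W$ alone. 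So the ``crux'' is not merely a matter of care---a new idea is needed to close this case.

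The paper's argument sidesteps the whole issue by avoiding induction. It removes a single term $h\ne g$ and observes that
\[
\Sigma(Rh^{-1})\subseteq\bigl(\{g,\ldots,|R|g\}\setminus\{\sigma(R)\}\bigr)\cap\bigl(\{g,\ldots,|R|g\}-h\bigr),
\]
which, since $h\notin\{0,g\}$ and $|R|<\ord(g)$, has size strictly less than $|R|-1$. Hence $\bigl|\{0\}\cup\Sigma(Rh^{-1})\bigr|=\bigl|\sum_{i=1}^{|R|-1}\{0,g_i\}\bigr|\le |R|-1$, and the iterated two-summand case of Proposition~\ref{mult_result}(i) forces every element of this sumset to have at least two representations; but $0$ has only the trivial one because $0\notin\Sigma(R)$. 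This single counting step replaces your entire low-$n$ analysis and works uniformly in $n$.
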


\begin{proof} The result is clear when $|R|\leq 2$, so we may assume $|R|\geq 3$.
In view of \eqref{gg1} and $|R|\leq \ord(g)-1$, we have $0\notin \Sigma(R)$. Suppose to the contrary that there is \be\label{iit}h\in \supp(R)\subseteq \Sigma(R)\subseteq \{g,2g,\ldots,|R|g\}\ee with $h\neq g$. Note, since $|R|\leq \ord(g)-1$, that \eqref{iit} shows $h\neq 0$ as well. From $0\notin \Sigma(R)$ and \eqref{gg1} (note if $R'|Rh^{-1}$ with $\sigma(R')=\sigma(R)$, then $\sigma(R{R'}^{-1})=0$), we have $$\Sigma(Rh^{-1})\subseteq (\{g,2g,\ldots, |R|g\}\setminus \{\sigma(R)\})\cap (\{g,2g,\ldots,|R|g\}-h).$$ Consequently, $h\notin \{g,0\}$, $0\notin \{g,2g,\ldots, |R|g\}$ and $\sigma(R)=|R|g$ imply that $|\Sigma(Rh^{-1})|<|Rh^{-1}|=|R|-1$.
As a result, $|\Sigma(Rh^{-1})\cup \{0\}|=|\Sum{i=1}{|R|-1}\{0,g_i\}|\leq |R|-1$, where $Rh^{-1}=g_1\cdots g_{|R|-1}$ with $g_i\in \supp(R)\subseteq \Sigma(R)\subseteq G\setminus \{0\}$. Hence Proposition \ref{mult_result}(i) (applied to the partial sums $\Sum{i=1}{j-1}\{0,g_i\}+\{0,g_j\}$) implies every element of $\Sum{i=1}{|R|-1}\{0,g_i\}$ has at least two representations, contradicting that $0\notin \Sigma(R)$.
\end{proof}

The next two lemmas will help with the detailed characterization of $S$.

\begin{lemma}\label{double-gen-lemm} Let $g,\,h\in C_n$ and let $S\in \Fc(C_n)$ with $S=g^lh^{n-l}$ and $l\geq n-l\geq 1$. Suppose $g$ is a generator and \be\label{sfut}\Sigma(h^{n-l})=\{g,2g,\ldots,(n-l-1)g\}\cup\{b_0\},\ee for some $b_0\in C_n$. If there is a unique $r\in [1,n]$ such that $0\in \Sigma_r(S)$, then either  $S=g^{n-1}h$ or else $n$ is odd, $h=\frac{n+1}{2}g$ and $S=g^{n-2}h^2$.
\end{lemma}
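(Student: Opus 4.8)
The plan is to pass to coordinates and reduce the statement to a purely combinatorial claim about arithmetic progressions in $\Z/n\Z$. Since $g$ is a generator, I identify $C_n$ with $\Z/n\Z$ by sending $kg\mapsto k$, write $h=cg$ for a unique $c\in\Z/n\Z$, and set $m:=n-l$, so that $l\geq n-l\geq 1$ becomes $m\geq 1$ together with $2m\leq n$. Under this identification the only subsequence of $h^{m}$ of length $j$ is $h^{j}$, so $\Sigma(h^{m})=\{c,2c,\ldots,mc\}$, and \eqref{sfut} reads $\{c,2c,\ldots,mc\}=\{1,2,\ldots,m-1\}\cup\{b_0\}$ as subsets of $\Z/n\Z$ (here I abuse notation and also write $b_0$ for its coordinate). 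The goal then becomes to show that either $c=1$, in which case $h=g$ and $S=g^{n}=g^{n-1}h$, or $m=2$ with $2c\equiv 1\pmod n$, in which case $n$ is odd, $h=\frac{n+1}{2}g$ and $S=g^{n-2}h^{2}$.

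First I would dispose of the small cases. If $m=1$ then $l=n-1$ and $S=g^{n-1}h$, giving the first alternative. If $m=2$, then \eqref{sfut} says $\{c,2c\}=\{1\}\cup\{b_0\}$, so $1\in\{c,2c\}$: either $c=1$ (first alternative) or $2c\equiv 1\pmod n$, which forces $n$ odd, $c=\frac{n+1}{2}$ and $S=g^{n-2}h^{2}$ (second alternative). So assume $m\geq 3$. Since $1$ now lies in $\{c,2c,\ldots,mc\}$, there is $j\in[1,m]$ with $jc\equiv 1$, whence $c$ is a unit modulo $n$ and (as $m\leq n$) the multiples $c,2c,\ldots,mc$ are pairwise distinct; thus they have exactly $m$ elements and $\{1,2,\ldots,m-1\}\subseteq\{c,2c,\ldots,mc\}$.

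The heart of the argument is a rigidity statement: a length-$m$ progression with difference $c$ can contain the whole interval $\{1,\ldots,m-1\}$ only when $c=1$. To prove it, for each $k\in\{1,\ldots,m-1\}$ let $\alpha_k\in[1,m]$ be the unique index with $\alpha_k c\equiv k$. For consecutive $k,k+1$ we get $(\alpha_{k+1}-\alpha_k)c\equiv 1$, and since $\alpha_{k+1}-\alpha_k\in[-(m-1),m-1]$ while $2m-1<n$, this difference is the unique representative $\delta$ of $c^{-1}$ in that window and is therefore independent of $k$. Hence $(\alpha_k)_{k=1}^{m-1}$ is an arithmetic progression with common difference $\delta$ lying inside the length-$(m-1)$ window $[1,m]$, which forces $(m-2)|\delta|\leq m-1$, so $|\delta|=1$ except possibly when $m=3$ and $|\delta|=2$.

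It remains to eliminate every possibility but $\delta=1$. The case $\delta=-1$ gives $c\equiv -1$, so $\{c,2c,\ldots,mc\}$ fills the interval $\{n-m,\ldots,n-1\}$, which cannot contain $1$ because $n-m=l\geq m\geq 3>1$; the residual case $m=3$, $|\delta|=2$ is ruled out by a direct check that the two forced residues cannot both fall in $\{1,2\}$ once $2m\leq n$ and $l\geq m$. Thus $\delta=1$, i.e. $c^{-1}\equiv 1$, so $c=1$, $h=g$ and $S=g^{n}=g^{n-1}h$, completing the first alternative. I expect this rigidity step — showing that the dilated progression of multiples of $h$ cannot cover an honest $g$-interval unless $h=g$ — to be the main obstacle, and it is exactly here that the bound $2m\leq n$ is indispensable. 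The hypothesis of a unique zero-sum length plays no role in forcing the structure, which is dictated entirely by \eqref{sfut} and $2m\leq n$; it does (like $|S|=n\geq\D(C_n)$) guarantee that a nontrivial zero-sum subsequence exists, and one checks a posteriori that the single zero-sum arises from the term with coordinate $b_0$, so the uniqueness hypothesis is automatically consistent with the above.
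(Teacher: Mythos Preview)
Your argument is correct and follows essentially the same arithmetic-progression rigidity idea as the paper: both proofs observe that a length-$m$ progression with difference $h$ can contain the interval $\{g,2g,\ldots,(m-1)g\}$ only when $h=\pm g$ (with the borderline case $m=3$, $g=\pm 2h$ handled separately), and then dispose of $h=-g$ and the borderline case. Your coordinate presentation via the indices $\alpha_k$ and the common step $\delta\equiv c^{-1}$ makes the paper's ``easily seen'' and ``individual consideration'' steps explicit, and you correctly note an extra point the paper does not: the unique-zero-sum-length hypothesis is never actually used, since $h=-g$ (your $\delta=-1$) and the $m=3$ cases are already excluded by \eqref{sfut} together with $2m\le n$.
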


\begin{proof}  The cases $n-l\leq 2$ and $l\leq 1$ are easily verified, so we may assume $3\leq n-l\leq n-2$ and thus $h\neq \pm g$ (else either there are two disjoint zero-sums of length $2$ or $S=g^{n-1}h=g^n$). Now \eqref{sfut} implies $$\Sigma(h^{n-l})=\{h,2h,\ldots,(n-l)h\}= \{g,2g,\ldots,(n-l-1)g\}\cup\{b_0\},$$ for some $b_0\in C_n$. Thus $\{h,2h,\ldots,(n-l)h\}$ contains an arithmetic progression of difference $g\neq \pm h$ and length $n-l-1\geq 2$.  Consequently, $h$ must also be a generator. Hence, if $n-l\geq 4$, then it is easily seen, in view of the hypothesis $n-l\leq \frac{n}{2}$, that $\{h,2h,\ldots,(n-l)h\}$ cannot contain an arithmetic progression of length $n-l-1$ and difference $g\neq \pm h$. On the other hand, if $n-l=3$, then this could only be possible if $g=\pm 2h$, and this final case can be eliminated by individual consideration, completing the proof.
\end{proof}

\begin{lemma}\label{non-gen-lemm} Let $G$ be an abelian group of order $n$ even, let $g,\,h\in G$ with $\ord(g)=\frac{n}{2}$ and $h\neq g$, and let $S\in \Fc(G)$ with $S=g^l{h}^{n-l}$, $n-l\geq 2$ and $l\geq \frac{n}{2}$. If $\frac{n}{2}\in [1,n]$ is the unique integer $r$ such that $0\in \Sigma_{r}(S)$, then $n-l$ is odd, $h\notin \langle g\rangle$ and $2h=2g$.
\end{lemma}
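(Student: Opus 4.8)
The plan is to set $m:=n/2=\ord(g)$ and exploit that $\langle g\rangle$ has order $m$ and index $2$ in $G$ (since $|G|=n=2m$), so that $G/\langle g\rangle\cong C_2$. Because $l\geq m$, the subsequence $g^m$ is a nonempty zero-sum of length $m$, so $0\in\Sigma_m(S)$; the hypothesis then says that \emph{every} nonempty zero-sum subsequence of $S$ has length exactly $m$. I will also record $m\geq 2$, since $n-l\geq 2$ and $l\geq m$ force $n=2m\geq m+2$. The core mechanism is the following completion trick: because $\ord(g)=m$, each element of $\langle g\rangle$ is uniquely of the form $jg$ with $0\leq j\leq m-1$; hence, whenever $-ch\in\langle g\rangle$ for a fixed number $c\leq n-l$ of copies of $h$, there is a unique $a\in[0,m-1]$ with $ag=-ch$, and then $g^ah^c$ is a zero-sum of length $a+c$ (a valid subsequence, as $a\leq m-1<m\leq l$). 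Uniqueness forces $a+c=m$, which pins down $a=m-c$ and hence the value of $ch$.

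First I would prove $h\notin\langle g\rangle$. Assuming the contrary, apply the mechanism with $c=1$ (one copy of $h$ is available since $n-l\geq 2$): the unique $a\in[0,m-1]$ with $ag=-h$ satisfies $a+1=m$, so $-h=(m-1)g=-g$ and thus $h=g$, contradicting $h\neq g$.

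Knowing $h\notin\langle g\rangle$, the image $\phi_{\langle g\rangle}(h)$ is the nonzero (order-$2$) element of $C_2$, so $\phi_{\langle g\rangle}(2h)=0$ and $2h\in\langle g\rangle$. Now apply the mechanism with $c=2$ (two copies are available since $n-l\geq 2$): the unique $a\in[0,m-1]$ with $ag=-2h$ yields a zero-sum $g^ah^2$ of length $a+2$, so uniqueness forces $a=m-2$ (here $m\geq 2$ guarantees $a\geq 0$). Thus $-2h=(m-2)g$, whence $2h=(2-m)g=2g$ using $mg=0$. For the parity of $n-l$, set $t:=h-g$; by the previous step $2t=2h-2g=0$ with $t\neq 0$, so $\ord(t)=2$. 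Since $ng=2(mg)=0$, a direct computation gives $\sigma(S)=lg+(n-l)h=ng+(n-l)t=(n-l)t$. If $n-l$ were even then $\sigma(S)=0$, making $S$ itself a zero-sum of length $n=2m\neq m$ and contradicting uniqueness; hence $n-l$ is odd.

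The individual steps are short, and the points requiring care are bookkeeping rather than depth: one verifies the availability of the needed copies of $h$ (from $n-l\geq 2$) and of $g$ (from $a\leq m-1<m\leq l$), checks $m\geq 2$ so that $a=m-2\geq 0$, and—crucially—excludes $h\in\langle g\rangle$ before invoking the $c=2$ argument, which relies on $2h\in\langle g\rangle$. The one genuinely structural input is that $\ord(g)=m$ makes the representation $ag=-ch$ unique in the range $[0,m-1]$; this is precisely what converts the uniqueness-of-length hypothesis into the exact identities $h=g$ (excluded) and $2h=2g$.
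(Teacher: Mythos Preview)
Your proof is correct and follows essentially the same line as the paper's: both first rule out $h\in\langle g\rangle$ by producing a zero-sum $g^{a}h$ of forbidden length, then use $2h\in\langle g\rangle$ together with $n-l\geq 2$ to force $2h=2g$ via the zero-sum $g^{a}h^{2}$. The only minor difference is in the parity step: you compute $\sigma(S)=(n-l)t$ directly to get a zero-sum of length $n$ when $n-l$ is even, whereas the paper observes that any $\tfrac{n}{2}$ terms containing an even number of $h$'s sum to zero and then exhibits two disjoint zero-sums of length $\tfrac{n}{2}$ (whose concatenation is again a length-$n$ zero-sum); your version is a bit more direct, but the underlying idea is the same.
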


\begin{proof} Since $h\neq g$, $l\geq \frac{n}{2}$ and $\ord(g)=\frac{n}{2}\in [1,n]$ is the unique integer $r$ such that $0\in \Sigma_{r}(S)$, we conclude that $h\notin \langle g\rangle$. However, noting that $2h\in \langle g\rangle$ (since $\langle g\rangle$ has index $2$), we likewise see that we must have $2h=2g$ (in view of $n-l\geq 2$), else the uniqueness of $\frac{n}{2}=\ord(g)$ is again contradicted. Consequently, the sum of any $\frac{n}{2}$-terms of $S$ using an even number of terms from $h^{n-l}$ has sum zero. As a result, if $n-l$ is even, then there are two disjoint zero-sum subsequences of length $\frac{n}{2}$, contradicting the uniqueness of $\frac{n}{2}$, and completing the proof.
\end{proof}

Next, we state and prove the main result. In the remark that follows the proof of Theorem \ref{thm-gen-graham}, we explain how the proof can be simplified in the case $G=C_p$ with $p$ prime, including the use of the Cauchy-Davenport Theorem in place of Devos-Goddyn-Mohar. Also, though we state the theorem for an arbitrary finite abelian group, most non-cyclic cases have no sequences satisfying the hypotheses (since $2\D(G)\leq |G|$ holds for most non-cyclic groups \cite[Theorem 5.5.5]{Alfred-book}.) The proof is divided into four main sections labeled steps.

\begin{theorem}\label{thm-gen-graham}
Let $G$ be a finite abelian group of order $n$ and let $S\in \Fc(G)$ with $|S|=n$. Suppose there is a unique $r\in [1,n]$ such that $0\in \Sigma_r(S)$. Then $|\supp(S)|\leq 2$.

If $G$ is non-cyclic, then $G=\langle h\rangle \oplus \langle g\rangle\cong C_2\oplus C_{2m}$, $r=\frac{n}{2}=2m$ and
$$S=g^{n-1}g'\quad\mbox{ or }\quad S=g^{n/2+x}(h+g)^{n/2-x}\quad\mbox{ or }\quad S=g^{n/2+x}(h+\frac{n+4}{4}g)^{n/2-x},$$
where $g\in G$, $h,\,g'\in G\setminus \langle g\rangle$, $\ord(g)=\frac{n}{2}$, $\ord(h)=2$ and  $x\in [1,\frac{n}{2}-1]$ is odd.

If $G$ is cyclic, then there exists a generator $g\in G\cong C_n$ such that either $$S=g^{n-1}g'\quad\mbox{ or }\quad S=(2g)^{n-1}g'',$$ for some $g'\in G$ or $g''\in G\setminus \langle 2g\rangle$; or $n$ is odd, $r=\frac{n+1}{2}$ and $$S=g^{n-2}(\frac{n+1}{2}g)^2;$$
or $n\equiv 2\mod 4$, $r=\frac{n}{2}$ and $$S=(2g)^{n/2+x}(\frac{n+4}{2}g)^{n/2-x},$$
where $x\in [0,\frac{n}{2}-1]$ is even; or $n$ is even, $r=\frac{n}{2}$ and $$S=g^{n/2+x}(\frac{n+2}{2}g)^{n/2-x},$$ where $x\in [0,\frac{n}{2}-1]$ with $\frac{n}{2}-x$ odd.
\end{theorem}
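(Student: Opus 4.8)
The plan is to first reduce to a two-term sequence and then read off the explicit list from the three preceding lemmas. Since $|S|=n\geq \D(G)$, a nontrivial zero-sum exists, so $r$ is well-defined; moreover two disjoint zero-sum subsequences would concatenate to a zero-sum of length $2r\neq r$, so no two disjoint zero-sums exist. Fixing one zero-sum $T\mid S$ with $|T|=r$, its complement $U=ST^{-1}$ is then zero-sum free, since any zero-sum of $U$ (necessarily of length $r$) is disjoint from $T$ and would yield a zero-sum of length $2r$; and every proper subsequence of $T$ is zero-sum free, having length $<r$, so $T$ is a minimal zero-sum. I would dispose of the degenerate cases up front: if $0\in\supp(S)$ then the length-one subsequence $0$ is itself a zero-sum, forcing $r=1$ and hence that $S0^{-1}$ is zero-sum free of length $n-1$, landing in the $g^{n-1}g'$ family; and if $\sigma(S)=0$ then $S$ itself is a zero-sum, forcing $r=n$ and collapsing the analysis. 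Let $g\in\supp(S)$ be a term of maximal multiplicity $m=\h(S)$.

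The heart of the argument (the first two steps) is to prove $|\supp(S)|\leq 2$. I would record the length-uniqueness in the convenient form: grouping a zero-sum as $g^jR'$ with $R'\mid Sg^{-m}$ and $\sigma(R')=-jg$ $(0\leq j\leq m)$ forces $|R'|=r-j$, so each of the distinct values $0,-g,\ldots,-mg$ is realized by subsequences of a single length. The key tool is then to bound the subsequence-sum sets: in the prime case the Cauchy-Davenport Theorem applied to the sets $\{0,s_i\}$ gives $|\Sigma(U)\cup\{0\}|\geq |U|+1$ and $|\Sigma(T)\cup\{0\}|\geq\min\{p,r+1\}$, and in general the Devos-Goddyn-Mohar Theorem (Theorem \ref{DGM-devos-etal-thm}) plays the same role. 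Since $|\Sigma(U)\cup\{0\}|+|\Sigma(T)\cup\{0\}|$ then exceeds $|G|$, Proposition \ref{mult_result}(ii) produces a representation $0=\sigma(U')+\sigma(T')$ with $U'\mid U$ and $T'\mid T$, i.e.\ a zero-sum $U'T'$; the point is to arrange, whenever $|\supp(S)|\geq 3$, either a second such representation of different total length or a length-ambiguous sum meeting $-(\Sigma(T)\cup\{0\})$, contradicting uniqueness. Pushing this forces the supports of $T$ and $U$ into arithmetic progressions of common difference $g$, at which stage Lemma \ref{neededlemma} collapses the minority part to a single term, giving $S=g^lh^{n-l}$ with (say) $l\geq n-l$.

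With $S=g^lh^{n-l}$ in hand, the remaining two steps extract the explicit families. First I would pin down the order of $g$: if $l\geq n-1$ the sequence is already of the form $g^{n-1}g'$ (or $(2g)^{n-1}g''$ after noting $g$ may be replaced by a generator), and otherwise the uniqueness of $r$ forces either $\ord(g)=n$ or, when $n$ is even, $\ord(g)=\frac{n}{2}$. In the generator case, the single-length condition says exactly that $\Sigma(h^{n-l})$ is the arithmetic progression $\{g,2g,\ldots,(n-l-1)g\}$ together with the extra point $\sigma(h^{n-l})$, which is the hypothesis of Lemma \ref{double-gen-lemm}; that lemma yields $S=g^{n-1}h$ or the exotic odd case $S=g^{n-2}(\frac{n+1}{2}g)^2$. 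In the order-$\frac{n}{2}$ case I would invoke Lemma \ref{non-gen-lemm} to get $h\notin\langle g\rangle$ and $2h=2g$, forcing $r=\frac{n}{2}$, the parity constraint on $\frac{n}{2}-l$, and the balanced families $g^{n/2+x}(\frac{n+2}{2}g)^{n/2-x}$ and $(2g)^{n/2+x}(\frac{n+4}{2}g)^{n/2-x}$. Finally, for non-cyclic $G$ the bound $2\D(G)\leq|G|$ (valid for all non-cyclic groups except $C_2\oplus C_{2m}$) shows no admissible $S$ exists outside $C_2\oplus C_{2m}$, where the same order-$\frac{n}{2}$ analysis, now with $\langle h\rangle\oplus\langle g\rangle\cong C_2\oplus C_{2m}$, produces the stated list.

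The main obstacle is the length bookkeeping in the core step: the Cauchy-Davenport and Devos-Goddyn-Mohar bounds control the \emph{size} of the subsequence-sum sets but not the lengths realizing a given sum, so the delicate part is converting ``the sumset is nearly extremal'' into ``a second zero-sum of different length exists'' whenever a third distinct term is present. This requires the equality/arithmetic-progression analysis of the Cauchy-Davenport bound, including the exceptional collision $t_1+t_2=0$ for two-element summand sets, fed precisely into the hypotheses of Lemma \ref{neededlemma}. The subsequent extraction of the explicit list, while routine, is also laborious, since the many parity splits (the parity of $n$, of $x$, and of $\frac{n}{2}-l$) separate the five cyclic families and must be aligned with the generator-versus-order-$\frac{n}{2}$ dichotomy.
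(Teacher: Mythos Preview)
Your decomposition $S=TU$ with $T$ a fixed minimal zero-sum and $U$ its zero-sum free complement is \emph{not} the paper's decomposition, and the core step you outline has a genuine gap. From $|\Sigma(U)\cup\{0\}|\geq |U|+1$ and $|\Sigma(T)\cup\{0\}|\geq r+1$ you get a total of at most $n+2$, so Proposition~\ref{mult_result}(ii) gives each element of $G$ only two representations in the sum. But $0$ already has two trivial representations, namely $0=0+0$ (both subsequences empty) and $0=0+\sigma(T)$; neither produces a zero-sum of length $\neq r$. To force a third representation you would need the sum to be at least $n+3$, which your bounds do not provide. More fundamentally, the sets $\Sigma(U)$ and $\Sigma(T)$ carry no length information: a representation $0=\sigma(U')+\sigma(T')$ says nothing about $|U'|+|T'|$, so even if you could manufacture extra representations it is unclear how to read off a length contradiction without substantial additional structure (your appeal to ``equality analysis of Cauchy--Davenport'' is too vague here, and in the non-prime case no such inverse theorem is available at this level).

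The paper avoids this length-control problem by a different first move. Rather than fixing a zero-sum and its complement, it applies Theorem~\ref{DGM-devos-etal-thm} (Cauchy--Davenport in the prime case) to the zero-padded sequences $0^{r-2}S$ and $0^{n-r-1}S$, using the identities $\Sigma_{\leq r-1}(S)=\Sigma_{r-1}(0^{r-2}S)$ and $\Sigma_{\geq r+1}(S)=\sigma(S)-\Sigma_{n-r-1}(0^{n-r-1}S)$. This immediately forces the maximum multiplicity $\h(S)\geq\max\{r,n-r\}$; that is the key structural fact, and it is what your plan is missing. Once a single term $g$ occupies at least half of $S$, the paper writes $S=g^lT$, takes $R\mid T$ maximal with $\sigma(R)=|R|g$, and in Steps~3--4 builds the set $A=\{g,2g,\ldots,(l+|R|)g,h\}$ (an arithmetic progression plus one extra point supplied by Lemma~\ref{neededlemma}) together with the partial-sum set $B$ of $TR^{-1}$. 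Now every representation $0\in A+B$ \emph{does} translate into a zero-sum of computable length, and the maximality of $R$ is what rules out a second representation. Lemma~\ref{neededlemma} is used here not to ``collapse the minority part to a single term'' as you propose, but to produce the auxiliary element $h$ outside the progression; Lemmas~\ref{double-gen-lemm} and~\ref{non-gen-lemm} enter only after $|\supp(S)|\leq 2$ is established, much as you say.
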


\begin{proof}  Recalling the well-known fact that a zero-sum free subsequence of length $|G|-1$ must be of the form $g^{|G|-1}$ for a generator $g\in G$ (this can be proved in a few lines using the trivial case $|B|=k=2$ in Proposition \ref{mult_result}(i); see also \cite[Lemma 5.4.2]{Alfred-book} for a slightly more involved proof), we see that the cases $r=1$ and $r=n$ are trivial. Therefore we assume $1<r<n$, whence $0\notin \supp(S)$.
Observe that \ber\label{bee1}0&\notin& \Sigma_{\leq r-1}(S)=\Sigma_{r-1}(0^{r-2}S),\\ 0&\notin& \Sigma_{\geq r+1}(S)=\Sigma_n(0^{n-r-1}S)=\sigma(S)-\Sigma_{n-r-1}(0^{n-r-1}S),\label{bee2}\eer where we have used for \eqref{bee2} the fact that $\Sigma_m(T)=\sigma(T)-\Sigma_{|T|-m}(T)$ for $T\in \Fc(G)$, which follows in view of the correspondence between $R|T$ and $TR^{-1}|T$.

\subsection*{Step 1.} Let $g\in \supp(S)$ be a term with $\vp_g(S)=l:=\h(S)$. We first show that either \ber\label{mult-hyp}\h(S)&\geq& \max\{r,\, n-r+1\}, \mbox{ or}\\ \label{mult-hyp-alt}\h(S)&\geq& \max\{r,\, n-r\}\und S=g^{n/2}{g'}^{n/2}\mbox{ with } \ord(g)=\ord(g')=n\mbox{ even},\eer where $g'\in G$.
We do so in two cases. First suppose \be\label{assump1}n-r-1\geq \frac{n}{2}-1,\ee in which case $n-r+1> r$. Note that if there are distinct $g,\,g'\in \supp(S)$ each with multiplicity at least $n-r$, then  \eqref{assump1} implies $n$ is even with $S=g^{n/2}{g'}^{n/2}$  and $r=\frac{n}{2}$.  If $\ord(g)=\ord(g')=n$, then \eqref{mult-hyp-alt} holds, as desired. On the other hand, if (say) $\ord(g)\leq \frac{n}{2}$, then $\ord(g)=r=\frac{n}{2}$, in which case the proof is easily concluded using Lemma \ref{non-gen-lemm}. Therefore we may assume there is at most one term with multiplicity at least $n-r$.

We apply Theorem \ref{DGM-devos-etal-thm} to $\Sigma_{n-r-1}(0^{n-r-1}S)$. Let $H=\HH(\Sigma_{n-r-1}(0^{n-r-1}S))$. Now assuming $\h(S)\leq  n-r$, it follows, in view of  \eqref{DGM-bound} and \eqref{bee2}, and since there is at most one term of multiplicity $n-r$, that $H$ is a proper, nontrivial subgroup. Moreover, in view of $$\vp_0(0^{n-r-1}S)=n-r-1\geq \frac{n}{2}-1\geq |G/H|-1,$$ which follows from \eqref{assump1},
we see that \eqref{DGM-bound} implies that all but at most $|G/H|-2$ terms of $S$ are from $H$. Letting $T|S$ be the subsequence of all terms not from $H$, we see that $\sigma(S)\in \sigma(T)+H$. Thus, since $|T|\leq |G/H|-2\leq \frac{n}{2}-1\leq n-r-1$ (by \eqref{assump1}), it follows, in view of the definition of $H$, that $\sigma(S)\in \Sigma_{n-r-1}(0^{n-r-1}S)$, in contradiction to \eqref{bee2}. Therefore we may instead assume \eqref{assump1} fails, i.e, \be\label{assump2} r-1> \frac{n}{2}-1.\ee

In this case, we apply the Theorem \ref{DGM-devos-etal-thm} to $\Sigma_{r-1}(0^{r-2}S)$. However, assuming $\h(S)\leq r-1$ and repeating the above arguments using \eqref{bee1} instead of \eqref{bee2} and using \eqref{assump2} instead of \eqref{assump1}, we arrive at the same contradiction. Therefore we conclude that $\h(S)\geq r> \frac{n}{2}>n-r$, as claimed. Thus \eqref{mult-hyp} is established in both cases.

\bigskip

Factor $S=g^lT$, where $T\in \Fc(G)$, and let $R|T$ be a maximal length subsequence (possibly trivial) such that $\sigma(R)=|R|g$.
In view of \eqref{mult-hyp}, \eqref{mult-hyp-alt} and \eqref{bee1}, it follows that \be\label{l-bigg}\vp_g(S)=l=\h(S)\geq \max\{r,\,n-r\}\geq \frac{n}{2}\geq |T|\ee and $0\notin \Sigma(T)$; in particular, $0\notin \Sigma(R)$.

\subsection*{Step 2.} Suppose $\ord(g)<n$. Then it follows in view of \eqref{l-bigg} that $r=\ord(g)$ and that $g$ is the only element from $H:=\langle g\rangle$ in $\supp(S)$ (else we can find a zero-sum of length distinct from $r$). Iteratively applying the definition of $\D(G/H)\leq |G/H|$ to $\phi_H(U^{-1}Sg^{-\ord(g)})$, beginning with $U$ trivial, we find a zero-sum mod $H$ subsequence $U|Sg^{-\ord(g)}$ with $|U|\geq n-|H|-|G/H|+1$. Adding on an appropriate number of terms from $g^{\ord(g)}$ (note $\Sigma(g^{\ord(g)})=H$) yields a zero-sum subsequence $U'|S$ with $|U'|\geq n-|H|-|G/H|+2$. If $|H|<\frac{n}{2}$, then $|U'|>|H|=r$, a contradiction. On the otherhand, if $|H|=\frac{n}{2}$, then we obtain the same contradiction unless $|U|=\frac{n}{2}-1$, $\sigma(U)=-g$ and $SU^{-1}g^{-n/2}=g_0\notin H$. Thus, if there is some $g'_0\in \supp(T)\setminus H$ with $g'_0\neq g_0$, then swapping $g_0$ for $g'_0$ in $U$ yields a new $U|Sg^{-\ord(g)}$ with $\sigma(U)\in H$ and $|U|=\frac{n}{2}-1$ but $\sigma(U)\neq -g$, whence we obtain the contradiction as before. Therefore, we instead see that all terms outside $H$ in $\supp(S)$ are equal to $g_0$. However, since all terms inside $H$ in $\supp(S)$ are equal to $g$, this shows $|\supp(S)|\leq 2$. But now the proof is easily concluded using Lemma \ref{non-gen-lemm}. So we henceforth assume $\ord(g)=n$, in which case $G\cong C_n$ is cyclic.

\bigskip

Since
$$|R|\leq |T|\leq r\leq l\leq n-2= \ord(g)-2$$ (the last inequality holds else the proof is complete, while the other inequalities follow from \eqref{l-bigg}), and since $\sigma(R)=|R|g$, it follows that \ber\label{lonestar}0\notin\{g,2g,\ldots,rg\}&\subseteq& \Sigma(g^l),\\\label{lonestar2} 0\notin \{(r+1)g,(r+2)g,\ldots,(l+|R|)g\}&\subseteq& \Sigma_{\geq r+1}(g^lR).\eer Hence $l+|R|\leq \ord(g)-1=n-1$ and $$|R|<|T|=n-l\leq r.$$

\subsection*{Step 3.} Next, we show that, when $R$ is nontrivial, there is some \be\label{h-exists}h\in \Sigma_{\geq r+1}(g^lR)\setminus \{g,2g,\ldots, (l+|R|)g\}.\ee Thus assume for the moment that $R$ is nontrivial. Then, in view of Lemma \ref{neededlemma} and $0\notin \Sigma(R)$, there is some nontrivial $R_0|R$ with  $\sigma(R_0)\notin \{0,g,\ldots,|R|g\}$. Note $\sigma(R_0)\neq |R|g=\sigma(R)$ implies $|R_0|<|R|<|T|\leq r$; thus $1\leq |R_0|\leq r-2$. If $\sigma(R_0)\in \{-g,-2g,\ldots, -(r-|R_0|-1)g\}$, then $0\in \Sigma_{\leq r-1}(g^lR_0)$, contrary to hypothesis.
Therefore $$\sigma(R_0)\in \{(|R|+1)g,(|R|+2)g,\ldots,(n-r+|R_0|)g\},$$ whence $l+|R|\leq \ord(g)-1=n-1$ and $g^l|S$ with $l\geq r\geq r-|R_0|+1\geq 0$ show that
either $\sigma(R_0)=(n-r+|R_0|)g=(|R_0|-r)g$ or else \eqref{h-exists} holds, as desired. However, in the former case, factor $R=R_0R_1$ and note that $\sigma(R_1)=\sigma(R)-\sigma(R_0)=(|R_1|+r)g$. Now $$|R|<|T|<|R_1|+r\leq |T|-1+r\leq n-1,$$ where the last inequality follows from $|T|=n-l$ with $l\geq r$, whence $\sigma(R_1)\notin \{0,g,\ldots,|R|g\}$ (in view of $\ord(g)=n$) and so $|R_1|<|R|<|T|\leq r$. Thus $1\leq |R_1|\leq r-2$, and applying the above arguments with $R_1$ instead of $R_0$, we establish \eqref{h-exists} unless $(|R_1|+r)g=\sigma(R_1)=(|R_1|-r)g$. However, the latter case implies $2rg=0$, whence $r=\frac{n}{2}$ with $n$ even.

Furthermore, by the above work for $R_0$ and $R_1$, we see that \eqref{h-exists} is established unless \be\label{trisket}\sigma(R')\in \{g,2g,\ldots,|R|g\}\cup \{(|R'|-\frac{n}{2})g\}\ee for all nontrivial $R'|R$. Applying \eqref{trisket} to each $x\in \supp(R)$,
noting that $g\notin \supp(R)$ (in view of $R|T$), and recalling that $|R|<|T|\leq r=\frac{n}{2}$,
we conclude that \be\label{stael}\nn\supp(R)\subseteq \{2g,3g,\ldots,(\frac{n}{2}-1)g\}\cup \{(\frac{n}{2}+1)g\}.\ee If there are $(\frac{n}{2}+1)g,\,x\in \supp(R)$ with $x\in \{2g,3g,\ldots,(\frac{n}{2}-1)g\}$, then applying \eqref{trisket} to the sequence $x((\frac{n}{2}+1)g)$ yields a contradiction. Therefore we conclude that either \be\label{deel}\supp(R)=\{(\frac{n}{2}+1)g\}\;\;\;\mbox{ or }\;\;\;\ \supp(R)\subseteq \{2g,3g,\ldots,(\frac{n}{2}-1)g\}.\ee

Noting that $\frac{n}{2}g=rg=-rg$ and $\sigma(R_1)=(|R_1|+r)g$, we see that $$\{(|R_1|+r+1)g,(|R_1|+r+2)g,\ldots ,(n-2)g\}\subseteq \sigma(R_1)+\Sigma_{\leq r-|R_1|-2}(g^l)\subseteq \Sigma_{\leq r-2}(g^lR_1).$$ Thus, since $|R_1|+r+1\leq |R|+l+1$, and in view of \eqref{lonestar2} and $\ord(g)=n$, we have \be\label{kush}\nn G\setminus \{-g,(r-1)g\}\subseteq \Sigma_{\leq r-2}(g^lR)\cup \Sigma_{\geq r}(g^lR).\ee As a result (recall $|R|<|T|$),
\ber\label{zat}\supp(TR^{-1})&=& \{-(r-1)g\}=\{(\frac{n}{2}+1)g\},\\\label{zet} (r-1)g&\notin &\Sigma_{\leq r-2}(g^lR)\cup \Sigma_{\geq r}(g^lR),\eer else we find a zero-sum of length distinct from $r$ using precisely one term from $TR^{-1}$ (recall $g\notin \supp(T)$ in view of the definition of $T$), contrary to hypothesis.

By \eqref{deel} and \eqref{zat}, we discover that $\supp(R)\subseteq \{2g,3g,\ldots,(\frac{n}{2}-1)g\}$, else $\supp(S)=\{g,\frac{n+2}{2}g\}$ with $r=\frac{n}{2}$, from which the remainder of the proof is easily deduced. Thus, since $r=\frac{n}{2}$ and $R$ is nontrivial, we see that $r\geq 3$ and  $(tg)g^{r-1-t}|g^lR$ for some $t\in [2,r-1]$. However $\sigma((tg)g^{r-1-t})=(r-1)g$ with $|(tg)g^{r-1-t}|=r-t\in [1,r-2]$, contradicting \eqref{zet}. So we see that \eqref{h-exists} is finally established.

\subsection*{Step 4.}
Let $A:=\{g,2g,\ldots,(l+|R|),h\}$ if $R$ is nontrivial, and otherwise let $A:=\{g,2g,\ldots, lg\}$. Let $TR^{-1}=g_1\cdots g_{n-|R|-l}$, where $g_i\in G$. Recall $|R|<|T|$, so $TR^{-1}$ is nontrivial.
Let $T_i:=g_1\cdots g_i$, for $i=0,1,\ldots,n-{|R|}-l$. Now $B:=\{\sigma(T_0),\sigma(T_1),\sigma(T_2),\ldots,\sigma(T_{n-{|R|}-l})\}$ is a set of cardinality $n-l-{|R|}+1$ by the following reasoning: if  $\sigma(T_i)=\sigma(T_j)$ with $i<j$, then  $\sigma(T_j{T_i}^{-1})=0$, which contradicts $0\notin \Sigma(T)$.
Note that $A+B=G$ in view of Proposition \ref{mult_result}(ii); moreover, if $|R|>0$, then every element has at least two representations.

Suppose $0\in (A+\sigma(T_i))\cap (A+\sigma(T_j))$ for some $i<j$, i.e., $0$ has at least two representations, say $0=x_ig+\sigma(T_i)$ and $0=x_jg+\sigma(T_j)$, as a sum in $A+B$, where $x_i,\,x_j\in [1,n]$. Consequently, since (from \eqref{lonestar2}) $$\{(r+1)g,(r+2)g,\ldots,(l+|R|)\}\subseteq \Sigma_{\geq r+1}(g^lR),$$ and since $h\in \Sigma_{\geq r+1}(g^lR)$ if $R$ is nontrivial (from \eqref{h-exists}), we see from the definition of $A$ that $x_i,\,x_j\in [1,r]$, else $0\in \Sigma_{\geq r+1}(S)$, contrary to hypothesis. Thus $g^{x_i}T_i|S$ and $g^{x_j}T_j|S$ are zero-sum subsequences, and so
our hypothesis of all zero-sums having length $r$ implies $\sigma(T_i)=(|T_i|-r)g$ and $\sigma(T_j)=(|T_j|-r)g$, whence $\sigma(T_jT_i^{-1})=|T_jT_i^{-1}|g$. But now $RT_jT_i^{-1}$ contradicts the maximality of $R$. Therefore we may instead assume $0$ has a unique representation in $A+B$, in which case $R$ is trivial, as remarked in the previous paragraph.

However, in this case $A=\{g,2g,\ldots,lg\}$ is an arithmetic progression with difference $g$ such that $0\in A+B=G$ is a unique expression element. Hence it follows that $$|B\cap \{-lg,-(l-1)g,\ldots,-g\}|=1.$$ Let $b_0\in B\cap \{-l,-(l-1)g,\ldots,-g\}$, so that (in view of $|B|=|G|-|A|+1=n-l+1$) \be\label{bitar}B=\{0,g,2g,\ldots,(n-l-1)g\}\cup \{b_0\}.\ee Observe, in view of \eqref{bitar} and Lemma \ref{double-gen-lemm}, that it now suffices to show $|\supp(S)|\leq 2$ to complete the proof. Let $T_k$ be the subsequence such that $\sigma(T_k)=b_0$.

Note that if we swap the index between $g_i$ and $g_{i+1}$, for $i\in [1,k-1]$, and use this ordering to define a new $B$, let us call it $B'$, as above, then $b_0\in B'$ and only one element of $B'$ differs from $B$, namely that corresponding to $\sigma(T_i)$. However, applying the above argument using $B'$ instead of $B$, we see that we again contradict the maximality of $R$ unless $B=B'$ (in view of $b_0\in B'$). As $B=B'$ if and only if $g_i=g_{i+1}$, we conclude that $g_1=g_2=\ldots=g_k$. Likewise, swapping the index between $g_i$ and $g_{i+1}$, for $i\in [k+1,n-l-1]$, and proceeding as we did for $i\in [1,k-1]$ allows us to conclude $g_{k+1}=g_{k+2}=\ldots=g_{n-l}$. Let $g_1=ag$ and $g_{n-l}=bg$, with $a,\,b\in [2,n-1]$ (since $0,\,g\notin \supp(T)$). If $|T|\geq 3$, then we can find an ordering of the $g_i$ such that $g_1=g_{n-l}$. Then using this ordering to define $B$ and repeating the above arguments, we either contradict the maximality of $R$ or show $|\supp(S)|=2$, in which case the proof is complete as remarked before. So it only remains to consider the case $|T|=2$, as the proof is trivially complete when $|T|=1$. But in this case, $l=n-2$ and $a,\,b\in [2,n-1]$ imply that $g^{n-a}(ag)|S$ and $g^{n-b}(bg)|S$ are both zero-sum subsequences of respective lengths $n-a+1$ and $n-b+1$, whence the uniqueness of $r$ as a zero-sum length implies $a=b$. Thus $\supp(S)=\{g,ag\}$, completing the proof as remarked before.
\end{proof}

\subsection*{Remark} When $G=C_n$ with $n$ prime, the above proof can be simplified. First remark that $\ord(g)=n$ holds trivially for $|G|=p$ prime, so Step 2 is unnecessary. Next, noting that the case $n=2$ is trivial, we can assume $n\geq 3$, and thus that $n$ is odd. This eliminates the lengthy extra portion of  Step 3 needed to establish \eqref{h-exists} when $r=\frac{n}{2}$ with $n$ even.
Also, the following argument, using the Cauchy-Davenport Theorem instead of the Devos-Goddyn-Mohar Theorem, can be used to establish \eqref{mult-hyp}.

To show \eqref{mult-hyp}, we proceed in the same two cases. First suppose \be\label{PPassump1}n-r-1\geq \frac{n}{2}-1,\ee in which case $n-r+1> r$. Note that if there are two distinct $g,\,g'\in \supp(S)$ with multiplicity at least $n-r$, then this contradicts \eqref{PPassump1} in view of $n$ odd, whence we may assume otherwise. Thus, assuming $\h(S)\leq  n-r$, it is easily seen that we can find $n-r-1$ nonempty sets $A_1,\ldots, A_{n-r-1}\subseteq G$ such that $\prod_{i=1}^{n-r-1}\prod_{g\in A_i}g=0^{n-r-1}Sx^{-1}\in \Fc(G)$, for some $x\in \supp(S)$ (see \cite[Proposition 2.1]{EGZ-II}). Applying the Cauchy-Davenport Theorem to $A_1,\ldots,A_{n-r-1}$, we find that $\Sigma_{n-r-1}(0^{n-r-1}Sx^{-1})=G$, whence $\Sigma_{n-r-1}(0^{n-r-1}S)=G$, contradicting \eqref{bee2}. Therefore we may instead assume \eqref{PPassump1} fails, i.e, \be\label{PPassump2} r> \frac{n}{2}.\ee
In this case, assuming $\h(S)\leq r-1$, we can (as before) find  $r-1$ nonempty sets $A_1,\ldots, A_{r-1}\subseteq G$ such that $\prod_{i=1}^{r-1}\prod_{g\in A_i}g=0^{r-2}S\in \Fc(G)$. Applying the Cauchy-Davenport Theorem to $A_1,\ldots,A_{r-1}$, we find that $\Sigma_{r-1}(0^{r-2}S)=G$, contradicting \eqref{bee1}. Therefore we conclude, in view of \eqref{PPassump2}, that $\h(S)\geq r> \frac{n}{2}>n-r$, as claimed. Thus \eqref{mult-hyp} is established in both cases.

\end{document}